\documentclass[12pt]{amsart}
\usepackage{amscd,amsmath,amsthm,amssymb}
\usepackage{color}
\usepackage{stmaryrd}
\usepackage{tikz}



%
%
%
\def\NZQ{\mathbb}               

\def\ZZ{{\NZQ Z}}

%
%
\def\frk{\mathfrak}               

\def\mm{{\frk m}}

%

\def\G{{\mathcal G}}

\def\pd{\textup{proj}\phantom{.}\!\textup{dim}}


%
\def\opn#1#2{\def#1{\operatorname{#2}}} 
%
\opn\chara{char} \opn\length{\ell} \opn\pd{pd} \opn\rk{rk}
\opn\projdim{proj\,dim} \opn\injdim{inj\,dim} \opn\rank{rank}
\opn\depth{depth} \opn\grade{grade} \opn\height{height}
\opn\embdim{emb\,dim} \opn\codim{codim}

\opn\Tr{Tr} \opn\bigrank{big\,rank}
\opn\superheight{superheight}\opn\lcm{lcm}
\opn\trdeg{tr\,deg}
	\opn\reg{reg} \opn\lreg{lreg} \opn\ini{in} \opn\lpd{lpd}
	\opn\size{size} \opn\sdepth{sdepth}
	\opn\link{link}\opn\fdepth{fdepth}\opn\lex{lex}
	\opn\tr{tr}
	\opn\type{type}
	\opn\gap{gap}
	\opn\diam{diam}
	\opn\Mod{Mod}
	\opn\revlex{revlex}
	%
	\opn\div{div} \opn\Div{Div} \opn\cl{cl} \opn\Cl{Cl}
	%
	%
	\opn\Spec{Spec} \opn\Supp{Supp} \opn\supp{supp} \opn\Sing{Sing}
	\opn\Ass{Ass} \opn\Min{Min}\opn\Mon{Mon}
	%
	%
	\opn\Ann{Ann} \opn\Rad{Rad} \opn\Soc{Soc}
	%
	%
	\opn\Im{Im} \opn\Ker{Ker} \opn\Coker{Coker} \opn\Am{Am}
	\opn\Hom{Hom} \opn\Tor{Tor} \opn\Ext{Ext} \opn\End{End}
	\opn\Aut{Aut} \opn\id{id}
	
	\opn\nat{nat}
	\opn\pff{pf}
	\opn\Pf{Pf} \opn\GL{GL} \opn\SL{SL} \opn\mod{mod} \opn\ord{ord}
	\opn\Gin{gin} \opn\Hilb{Hilb}\opn\sort{sort}
	\opn\PF{PF}\opn\Ap{Ap}
	\opn\dist{dist}
	%
	%
	\opn\aff{aff}
	\opn\relint{relint} \opn\st{st}
	\opn\lk{lk} \opn\cn{cn} \opn\core{core} \opn\vol{vol}  \opn\inp{inp} \opn\nilpot{nilpot}
	\opn\link{link} \opn\star{star}\opn\lex{lex}\opn\set{set}
	\opn\width{wd}
	\opn\Fr{F}
	\opn\QF{QF}
	\opn\G{G}
	\opn\type{type}\opn\res{res}
	\opn\conv{conv}
	\opn\sr{sr}
	\opn\gr{gr}
	
	%
	%
	
	\def\pot#1#2{#1[\kern-0.28ex[#2]\kern-0.28ex]}

	%
	%
	\opn\dirlim{\underrightarrow{\lim}}
	\opn\inivlim{\underleftarrow{\lim}}
	%
	%
	%

	\let\iso=\cong

	%
	%
	\let\to=\rightarrow
	
	\def\Implies{\ifmmode\Longrightarrow \else
		\unskip${}\Longrightarrow{}$\ignorespaces\fi}
	\def\implies{\ifmmode\Rightarrow \else
		\unskip${}\Rightarrow{}$\ignorespaces\fi}
	\def\iff{\ifmmode\Longleftrightarrow \else
		\unskip${}\Longleftrightarrow{}$\ignorespaces\fi}

	\let\:=\colon
	\newtheorem{Theorem}{Theorem}[section]

	\newtheorem{Example}[Theorem]{Example}

	\newtheorem{Conjecture}[Theorem]{Conjecture}

	%
	%
	\let\epsilon\varepsilon
	\let\kappa=\varkappa
	%
	%
	\textwidth=15cm \textheight=22cm \topmargin=0.5cm
	\oddsidemargin=0.5cm \evensidemargin=0.5cm \pagestyle{plain}
	%
	%
	\def\qed{\ifhmode\textqed\fi
		\ifmmode\ifinner\hfill\quad\qedsymbol\else\dispqed\fi\fi}
	\def\textqed{\unskip\nobreak\penalty50
		\hskip2em\hbox{}\nobreak\hfill\qedsymbol
		\parfillskip=0pt \finalhyphendemerits=0}
	\def\dispqed{\rlap{\qquad\qedsymbol}}
	
	%
	\opn\dis{dis}
	\def\pnt{{\raise0.5mm\hbox{\large\bf.}}}
	
	\opn\Lex{Lex}
	\opn\Max{Max}
	\opn\Shad{Shad}
	\opn\astab{astab}

	\def\m{\mathfrak{m}}
	
	\opn\v{v}
	
	

	
	\begin{document}

	\title{The eventual shape of the Betti table of $\m^kM$}
	\author{Antonino Ficarra, J\"urgen Herzog, Somayeh Moradi}
	
	\address{Antonino Ficarra, Department of mathematics and computer sciences, physics and earth sciences, University of Messina, Viale Ferdinando Stagno d'Alcontres 31, 98166 Messina, Italy}
	\email{antficarra@unime.it}
	
	\address{J\"urgen Herzog, Fakult\"at f\"ur Mathematik, Universit\"at Duisburg-Essen, 45117 Essen, Germany} \email{juergen.herzog@uni-essen.de}
	
	\address{Somayeh Moradi, Department of Mathematics, Faculty of Science, Ilam University, P.O.Box 69315-516, Ilam, Iran}
	\email{so.moradi@ilam.ac.ir}
	
	\thanks{The third author is supported by the Alexander von Humboldt Foundation.
	}
	
	\subjclass[2020]{Primary 13D02, 13F20; Secondary 13F55.}
	
	\keywords{componentwise linear module, Betti table}
	
	\maketitle
	
	\begin{abstract}
	Let $S$ be the polynomial ring  over a field  $K$  in a finite set of variables, and let $\mm$ be the graded maximal ideal of  $S$.  It is known that for  a finitely generated graded $S$-module $M$ and all integers  $k\gg 0$, the module $\m^kM$ is componentwise linear. For large $k$ we describe the pattern of the Betti table of $\mm^kM$ when $\chara(K)=0$ and $M$ is a submodule of a finitely generated graded free $S$-module. Moreover, we show that for any $k\gg 0$, $\m^kI$ has linear quotients if $I$ is a monomial ideal.
	\end{abstract}

		\section*{Introduction}
		
Let $K$ be a field and $S=K[x_1,\dots,x_n]$ be the polynomial ring. We denote by $\m$ the unique graded maximal ideal of $S$. 
For a finitely generated graded $S$-module $M$, it follows from \cite{EG} and \cite{R}   and independently from \cite[Theorem 3.2]{Se}  that for large enough $k$, $\mm^k M$ is componentwise linear. We denote the smallest such number by $c_M$. 
In this paper we describe the pattern of the Betti table of $\mm^kM$ when $\chara(K)=0$ and $M$ is a submodule of a finitely generated graded free $S$-module. We show that if $k\geq c_M+1$, then all non-zero  strands of the Betti table are full in the sense that  if $\beta_{0,j}(\mm^kM)\neq 0$, then $\beta_{i,i+j}(\mm^kM)\neq 0$ for $i=0,\ldots,n-1$. For the proof we use the result of Nguyen \cite[Theorem 3.5 and its proof]{Nguyen}, which implies that the  maps $\Tor_i^S(K,\m^{k}M)\to \Tor_i^S(K,\m^{k-1}M)$  are zero for all $i$ when $k\geq c_M+1$. Furthermore we use that for a componentwise linear module $M$ which is a submodule of a finitely generated graded free $S$-module,  $\beta_{i, i+j}(M)\neq 0$ implies that $\beta_{i', i'+j}(M)\neq 0$ for all $0\le i'\le i$. For this fact, which follows from   
\cite[Lemma 3.8, Theorem 3.10]{CR}, it is required that $\chara(K)=0$. We also show that for all  $k\geq c_M$, the Betti tables of $\m^kM$ have the same pattern. In other words, the $\ell$th strand of $\mm^kM$ is non-zero, if and only if the $(\ell+1)$th strand of $\mm^{k+1}M$ is non-zero. When  $\depth \gr_{\mm}(M)>0$, we determine precisely which strands are non-zero.

Finally,  for the more special case that $I$ is a monomial ideal, we show  in Theorem~\ref{linquot} that $\mm ^kI$ has linear quotients for all large $k$. For monomial ideals this is an improvement of Theorem~\ref{asymptotic}, since any monomial ideal with linear quotients is componentwise linear.

		\section{The eventual shape of the Betti table of $\m^kM$}
		
		Let $M=\bigoplus_{i\in \ZZ}M_i$ be a finitely generated graded $S$-module.  For any integer $d\ge0$, we denote by $M_{\langle d\rangle}$ the graded $S$-module generated by the $K$-vector space $M_d$, and by $M_{\ge d}$ we denote the truncated module $\bigoplus_{i\ge d}M_i$. The \textit{initial degree} of $M$, denoted by $\alpha(M)$, is the smallest integer $j$ such that $M_{j}\ne0$.

The next result is well-known. For the convenience of the reader a short proof is provided.

	\begin{Theorem}\label{asymptotic}
	Let $M$ be a finitely generated graded $S$-module. Then $\m^kM$ is componentwise linear for all $k\ge  \reg(\gr_{\mm}(M))$.
\end{Theorem}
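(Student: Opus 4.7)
The plan is to fix $k \ge r := \reg(\gr_\mm M)$ and verify componentwise linearity of $\mm^k M$ by analyzing its Betti table via the canonical filtration short exact sequence
\[
0 \longrightarrow \mm^{k+1}M \longrightarrow \mm^k M \longrightarrow G_k \longrightarrow 0, \qquad G_k := \mm^k M/\mm^{k+1}M,
\]
together with the induced long exact sequence of $\Tor^S(K,-)$. I would first exploit the fact that $\mm \cdot G_k = 0$, so that $G_k$ is a direct sum of shifted copies of $K$ and its minimal graded free resolution is a direct sum of shifted Koszul complexes. This gives explicitly
\[
\beta_{i,i+j}^{S}(G_k) = \binom{n}{i}\,\dim_K(G_k)_j.
\]
A short verification then shows that $\gr_\mm M$ is generated as a graded $S$-module by $G_0 = M/\mm M$: any $\overline m\in G_k$ with $m=\sum c_\ell m_\ell$, $c_\ell\in\mm^k$, $m_\ell\in M$, decomposes as $\overline m = \sum x^{\alpha_\ell}\cdot\overline{m_\ell}$ with $|\alpha_\ell|=k$. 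Consequently $(G_k)_j\ne 0$ only when $j-k$ is the degree of a minimal generator of $M$, and the nonzero Betti numbers of $G_k$ lie exactly on the rows corresponding to the minimal generator degrees $k+d_i$ of $\mm^k M$, which is the shape required for componentwise linearity.

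I would then argue by downward induction on $k$. The base case is the well-known (crude) fact that $\mm^k M$ is componentwise linear for $k\gg 0$, as recorded in \cite{EG} and \cite{R}; the induction descends from some large $k_0$ down to $r$. For the inductive step, assuming $\mm^{k+1}M$ is componentwise linear, the long exact sequence of $\Tor^S(K,-)$ combined with the componentwise linear shape of $G_k$ should force the Betti numbers of $\mm^k M$ to concentrate on the rows $k+d_i$: the ideal situation is that the map $\Tor_i^S(K,\mm^{k+1}M)\to\Tor_i^S(K,\mm^kM)$ vanishes, so that the long exact sequence splits into short exact sequences and the Betti numbers of $\mm^kM$ embed into those of $G_k$ in each internal degree.

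The main obstacle, and the essential use of the hypothesis $k\ge\reg(\gr_\mm M)$, is precisely this vanishing: one must translate the regularity bound on $\gr_\mm M$ into degree estimates that force both the relevant maps $\Tor_i^S(K,\mm^{k+1}M)\to\Tor_i^S(K,\mm^kM)$ and the connecting maps $\Tor_i^S(K,G_k)\to\Tor_{i-1}^S(K,\mm^{k+1}M)$ to respect the linear-strand decomposition. Carrying out this degree-tracking rigorously is the technical heart of Seipel's argument in \cite[Theorem~3.2]{Se}; the condition $k\ge r$ is exactly what makes the shifts in a minimal graded free resolution of $\gr_\mm M$ align with the generator rows of $\mm^kM$.
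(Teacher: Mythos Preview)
Your approach is genuinely different from the paper's, but it has a real gap and ultimately defers the essential work back to the literature.

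The paper's argument is a two-line reduction that avoids the long exact $\Tor$ sequence entirely. It uses R\"omer's criterion \cite[Theorem~3.2.8]{R}: a graded module $N$ is componentwise linear if and only if $\gr_\mm(N)$ has a linear resolution. Since $\gr_\mm(\mm^k M)=\bigoplus_{j\ge k}\mm^jM/\mm^{j+1}M=(\gr_\mm M)_{\ge k}$, the Eisenbud--Goto truncation criterion gives that this module has a linear resolution as soon as $k\ge\reg(\gr_\mm M)$. That is the entire proof; no induction, no $\Tor$ maps, no Betti-table shape analysis.

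Your plan, by contrast, has two problems. First, the step ``the nonzero Betti numbers of $\mm^kM$ lie only on the rows $j=k+d_i$, which is the shape required for componentwise linearity'' is not a valid implication. Having $\beta_{i,i+j}(N)=0$ whenever $j$ is not a generator degree of $N$ is a necessary consequence of componentwise linearity (via \cite[Proposition~3.2]{HR}), but it is not sufficient: the Betti table of $N$ does not determine the Betti tables of the submodules $N_{\langle j\rangle}$, and a non-linear syzygy of $N_{\langle d_1\rangle}$ can be cancelled in the resolution of $N$ by generators in higher degree. You would need an additional argument here, and none is indicated.

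Second, your inductive step hinges on the vanishing of $\Tor_i^S(K,\mm^{k+1}M)\to\Tor_i^S(K,\mm^kM)$, and you acknowledge that establishing this from the hypothesis $k\ge\reg(\gr_\mm M)$ is ``the technical heart'' which you outsource to \cite{Se}. But by Nguyen's criterion \cite[Theorem~3.5]{Nguyen}, that vanishing is already \emph{equivalent} to $\mm^kM$ being componentwise linear, so once you have it the rest of your argument (Betti shape, downward induction) is superfluous; and if you are invoking \cite{Se} to get it, you are essentially quoting the theorem rather than proving it. (Incidentally, the author of \cite{Se} is \c{S}ega, not Seipel.)
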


\begin{proof}
By \cite[Theorem 3.2.8]{R} a finitely generated graded module $N$ is componentwise linear if and only if $\gr_{\mm}(N)$ has a linear resolution.  Therefore by Eisenbud-Goto, $(\gr_{\mm}(M))_{\langle d\rangle} = \gr_{\mm}(\mm^dM)$ has a linear resolution for all $d \geq \reg(\gr_{\mm}(M))$, see \cite[Theorem 4.3.1]{BH}. Hence $\mm^dM$ is componentwise linear for all $n\geq \reg( \gr_{\mm}(M))$.	
\end{proof}

In the next result we describe the eventual shape of the Betti table of $\mm^k M$. 

\begin{Theorem}\label{pattern}
Let the base field $K$ be of characteristic zero, and let $M$ be a graded $S$-module which is a submodules of a finitely generated graded free $S$-module. Then
\begin{enumerate}
\item[\textup{(a)}] For any $k\ge c_M+1$, all non-zero strands of $\m^kM$ are full, which means that if the $\ell$th strand of $\m^kM$ is non-zero, then $\beta_{i,i+\ell}(\m^kM)\ne0$ for all $i=0,\dots,n-1$.
\item[\textup{(b)}] For $k\ge c_M$, the $\ell$th strand of $\m^{k} M$ is non-zero if and only if the $(\ell+1)$th strand of $\m^{k+1} M$ is non-zero.
\item[\textup{(c)}] For all $k\ge c_M+1$, the Betti diagrams of the modules $\m^k M$ have the same shape.

\item[\textup{(d)}]  Suppose that $M$ is minimally generated in degrees $d_1,\ldots,d_m$. For any $k\ge c_M$, if the $\ell$th strand of $\m^{k} M$ is non-zero, then $\ell-k=d_i$ for some $1\leq i\leq m$.
 If in addition $\depth \gr_{\mm}(M)>0$, then  the $\ell$th strand of $\m^{k} M$ is non-zero if and only if $\ell-k=d_i$ for some $1\leq i\leq m$.
\end{enumerate}	
\end{Theorem}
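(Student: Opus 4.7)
The plan is to derive all four assertions from a single family of short exact sequences of $\Tor$ groups. For $k\geq c_M$, Nguyen's vanishing theorem forces the natural map $\Tor_i^S(K,\m^{k+1}M)\to \Tor_i^S(K,\m^kM)$ to vanish for every $i$, so the long exact sequence of $\Tor_\bullet^S(K,-)$ attached to
\[
0\to \m^{k+1}M\to \m^kM\to V_k\to 0, \qquad V_k:=\m^kM/\m^{k+1}M,
\]
splits into short exact sequences
\[
0\to \Tor_i^S(K,\m^kM)\to \Tor_i^S(K,V_k)\to \Tor_{i-1}^S(K,\m^{k+1}M)\to 0
\]
for every $i\geq 0$. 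Because $V_k$ is annihilated by $\m$, its Koszul resolution gives $\Tor_i^S(K,V_k)_{i+j}\cong (V_k)_j\otimes_K\Lambda^iK^n$, and these sequences translate into the numerical identity
\[
\beta_{i,i+\ell}(\m^kM)+\beta_{i-1,i+\ell}(\m^{k+1}M)=\binom{n}{i}\dim_K(V_k)_\ell \qquad (\ast)
\]
that drives the whole proof.

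For part (a), fix $k\geq c_M+1$ and assume $\beta_{i_0,i_0+\ell}(\m^kM)\neq 0$ for some $i_0$. Instantiating $(\ast)$ at $(k{-}1,\,i_0{+}1,\,\ell{-}1)$ forces $(V_{k-1})_{\ell-1}\neq 0$. Applying $(\ast)$ again at $(k{-}1,\,n,\,\ell{-}1)$, and using that any submodule of a free $S$-module has positive depth and hence $\pd(\m^{k-1}M)\leq n-1$ by Auslander--Buchsbaum, the identity collapses to
\[
\beta_{n-1,\,(n-1)+\ell}(\m^kM)=\dim_K(V_{k-1})_{\ell-1}>0.
\]
The CR staircase from \cite[Lemma 3.8, Theorem 3.10]{CR}, applied to the componentwise linear submodule-of-free $\m^kM$ in characteristic zero, then propagates this top-row nonvanishing downward and gives $\beta_{i,i+\ell}(\m^kM)\neq 0$ for every $0\leq i\leq n-1$.

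For parts (b) and (c), the extreme cases of $(\ast)$ at $i=0$ and $i=n$ (the latter again via $\pd\leq n-1$) collapse to
\[
\beta_{0,\ell}(\m^kM)=\dim_K(V_k)_\ell \quad\text{and}\quad \beta_{n-1,\,(n-1)+(\ell+1)}(\m^{k+1}M)=\dim_K(V_k)_\ell.
\]
Combining these with the CR staircase for $\m^kM$ and with part (a) applied to $\m^{k+1}M$ yields the chain of equivalences
\[
\text{$\ell$th strand of $\m^kM$ is nonzero} \iff (V_k)_\ell\neq 0 \iff \text{$(\ell{+}1)$th strand of $\m^{k+1}M$ is nonzero},
\]
which is (b). Part (c) is then immediate: by (a) every nonzero strand is full, and by (b) the set of nonzero strands shifts by one each time $k$ increases by one, so the Betti diagrams of $\m^kM$ and $\m^{k+1}M$ have the same shape.

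For part (d), the forward direction is a generator-degree count: if the $\ell$th strand of $\m^kM$ is nonzero then CR gives $\beta_{0,\ell}(\m^kM)\neq 0$, equivalently $(V_k)_\ell\neq 0$, and any nonzero class there is represented by a product $x^\alpha g_j$ with $|\alpha|=k$ and $\deg g_j=d_j$, forcing $\ell-k=d_j$ for some $j$. For the converse under $\depth\gr_\m(M)>0$, one picks (using that $K$ is infinite in characteristic zero) a linear form $y\in\m$ whose class in $\gr_\m(S)_1$ is a non-zerodivisor on $\gr_\m(M)$; multiplication by $y$ embeds $(V_k)_{k+d_i}\hookrightarrow (V_{k+1})_{(k+1)+d_i}$, and starting from the nonzero base case $(V_0)_{d_i}=(M/\m M)_{d_i}\neq 0$, an easy induction gives $(V_k)_{k+d_i}\neq 0$ for every $k\geq 0$. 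Hence $\beta_{0,k+d_i}(\m^kM)=\dim_K(V_k)_{k+d_i}>0$ and the $(k+d_i)$th strand of $\m^kM$ is nonzero. I expect the main subtlety to lie in the top-row step of (a): converting Nguyen's qualitative Tor-vanishing into the sharp identity $\beta_{n-1,(n-1)+\ell}(\m^kM)=\dim_K(V_{k-1})_{\ell-1}$ that makes the CR staircase bite at the last column of the Betti table rather than only near its first column.
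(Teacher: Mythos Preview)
Your proof is correct and follows essentially the same route as the paper's: both arguments hinge on Nguyen's Tor-vanishing to break the long exact sequence for $0\to\m^{k+1}M\to\m^kM\to V_k\to 0$ into short exact pieces, then use $\pd(\m^{k-1}M)\le n-1$ to isolate $\beta_{n-1,(n-1)+\ell}(\m^kM)=\dim_K(V_{k-1})_{\ell-1}$, and finally invoke the Crupi--Restuccia staircase to fill the strand. The only notable difference is in part~(d): the paper obtains the forward implication from the Herzog--Restuccia formula $\beta_{i,i+j}(N)=\beta_i(N_{\langle j\rangle})-\beta_i(\m N_{\langle j-1\rangle})$ for componentwise linear $N$, whereas you bypass this citation by going through $\beta_{0,\ell}(\m^kM)=\dim_K(V_k)_\ell$ and a direct generator-degree count; your phrasing ``any nonzero class there is represented by a product $x^\alpha g_j$'' should more precisely read ``\dots by a linear combination in which some term has $|\alpha|=k$'', but the conclusion $\ell-k\in\{d_1,\dots,d_m\}$ follows immediately since $\m^kM$ is generated by the $x^\alpha g_j$ with $|\alpha|=k$.
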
		

\begin{proof}
(a) For any positive integer $k$, we set $W_k=\m^kM/\m^{k+1}M$. Then the short exact sequence $0\to \m^{k}M\to \m^{k-1}M\to W_{k-1}\to 0$ gives rise to the long exact sequence 
\begin{eqnarray*}
\cdots&\to& \Tor_i^S(K,\m^{k}M)\to \Tor_i^S(K,\m^{k-1}M) \to \Tor_i^S(K,W_{k-1})\\ &\to&
 \Tor_{i-1}^S(K,\m^{k}M)\to\cdots.
\end{eqnarray*}
Let $k\geq c_M+1$. Then by Theorem~\ref{asymptotic} the module
$\m^{k-1}M$ is componentwise linear. Therefore \cite[Theorem 3.5 and its proof]{Nguyen} (see also \cite[Theorem 3.10]{NV}) imply that the maps $\Tor_i^S(K,\m^{k}M)\to \Tor_i^S(K,\m^{k-1}M)$ in the above long exact Tor-sequence are zero for all $i$. Thus we obtain the short exact sequences
\begin{equation}\label{vanishing}
0\to \Tor_i^S(K,\m^{k-1}M) \to \Tor_i^S(K,W_{k-1}) \to \Tor_{i-1}^S(K,\m^{k}M)\to 0
\end{equation}
for all $i\geq 0$.  
We claim that the $\ell$th strand of $\m^k M$ is non-zero if and only if $(W_{k-1})_{\ell-1}\neq 0$, and when this is the case, the $\ell$th strand of $\m^k M$ is full. 
Indeed, since $\depth \m^{k-1}M>0$, we have $\Tor_{n}^S(K,\m^{k-1}M)=0$. 
Thus from the exact sequence (\ref{vanishing}) we obtain
$\Tor_{n-1}^S(K,\m^{k}M)\iso \Tor_n^S(K,W_{k-1})$.  
Since $W_{k-1}$ is a $K$-vector space, we conclude that $(W_{k-1})_{\ell-1}\neq 0$ if and only if $\Tor_i^S(K,W_{k-1})_{i+\ell-1}\neq 0$ for all $1\leq i\leq n$. 
Therefore, whenever $(W_{k-1})_{\ell-1}\neq 0$, we get $\beta_{n-1,n-1+\ell}(\m^k M)=\dim_K \Tor_{n-1}^S(K,\m^{k}M)_{n+\ell-1}=\dim_K \Tor_n^S(K,W_{k-1})_{n+\ell-1}\neq 0$.  
Since $\m^{k}M$ is componentwise linear, this together with \cite[Lemma 3.8, Theorem 3.10]{CR} implies that the $\ell$th strand of $\mm^{k}M$ is full.    
 
Conversely, suppose that the $\ell$th strand of $\m^{k}M$ is non-zero. This means that $\Tor_s^S(K,\m^{k}M)_{s+\ell}\neq 0$ for some $1\leq s\leq n-1$. Hence the exactness of the sequence (\ref{vanishing}) implies that $ \Tor_{s+1}^S(K,W_{k-1})_{s+\ell}\neq 0$. As it was mentioned above, this is equivalent to $(W_{k-1})_{\ell-1}\neq 0$ and when this happens the $\ell$th strand of $\m^{k}M$ is full.   

\medskip

(b) Let $k\ge c_M$. Since $\m^{k}M$ is componentwise linear, applying once again \cite[Lemma 3.8, Theorem 3.10]{CR} we have the $\ell$th strand of $\m^{k}M$ is non-zero if and only if $\Tor_0^S(K,\m^{k}M)_{\ell}\neq 0$. 
Note that $\Tor_0^S(K,\m^{k}M)\iso W_{k}$. So
$\Tor_0^S(K,\m^{k}M)_{\ell}\neq 0$ if and only if $(W_{k})_{\ell}\neq 0$.  By the proof of (a) this is the case if and only if the $(\ell+1)$th strand of $\m^{k+1} M$ is non-zero. 

(c) follows from (a) and (b). 

(d) By \cite[Proposition 3.2]{HR}, if $N$ is a componentwise linear $S$-module, then $\beta_{i,i+j}(N)=\beta_i(N_{\langle j\rangle})-\beta_i(\mm N_{\langle j-1\rangle})$ for all $i$ and $j$. If $N$ has no minimal homogeneous generator of degree $j$, then $N_{\langle j\rangle}=\mm N_{\langle j-1\rangle}$, and hence $\beta_{i,i+j}(N)=0$ for all $i$. This shows that the $\ell$th strand of $N$ is non-zero if and only if $\ell$ is a degree of a minimal homogeneous generator of $N$. Now, let  $k\ge c_M$. Since $\mm^k M$  has a minimal set of homogeneous generators, such that the degrees of its elements belong to the set $\{k+d_1,\ldots,k+d_m\}$, the first statement follows from the above fact.

Now, suppose that  $\depth \gr_{\mm}(M)>0$. Since $K$ is infinite, we may choose a non-zerodivisor element say $x$ of $\gr_{\mm}(M)$ of degree one. Thus the multiplication maps $\mm^kM/\mm^{k+1}M\to \mm^{k+1}M/\mm^{k+2}M$ induced by $x$ are injective for all $k\ge0$. Hence, if $f$ is a minimal generator of $M$ having degree $d_i$ for some $i$, then $x^k f$ is a minimal generator of $\mm^k M$ of degree $k+d_i$. If $k\ge c_M$, then for all $i$ the $(k+d_i)$th strand of $\mm^k M$ is non-zero. The first statement shows that these are indeed the only non-zero strands of $\mm^k M$.
\end{proof}

The assumption $\depth \gr_{\mm}(M)>0$ given in part (d) can not be dropped. Indeed, the componentwise linear ideal $I=(x^3,x^2y^2,y^3)$ of $S=K[x,y]$ is generated in two degrees, but $\mm I=(x^4,x^3y,xy^3,y^4)$ is generated in a single degree. Note that $\depth \gr_{\mm}(I)=0$.

We demonstrate Theorem \ref{pattern} with an example.
\begin{Example}
Consider the graded ideal
$I=(x_1x_2^3+x_3^4,x_1+x_2+x_4,x_2^3)$ in the polynomial ring $S=K[x_1,x_2,x_3,x_4]$. The Betti diagrams of the ideals $I$, $\mm I$, $\mm^2 I$ and $\mm^3 I$ are demonstrated below. It can be seen that after the second power of $\mm$, the non-zero strands of $\mm^k I$ are full and have the same shape. Moreover, the non-zero strands of $\mm^k I$ appear in the degrees $k+1,k+3$ and $k+4$ for $k\geq 2$. 

\begin{minipage}{0.4\textwidth} 
	\begin{verbatim} 
	       0    1    2
	--------------------
	1:     1    -    -
	2:     -    -    -
	3:     1    1    -
	4:     1    1    -
	5:     -    -    -
	6:     -    1    1
	--------------------
	Tot:   3    3    1
	\end{verbatim}
\end{minipage}
\hfill
\begin{minipage}{0.7\textwidth}   
	\begin{verbatim}
       0    1    2    3
-------------------------
2:     4    6    4    1
3:     -    -    -    -
4:     3    6    4    1
5:     3    6    4    1
6:     -    1    1    -
-------------------------
Tot:   10   19   13    3
		
	\end{verbatim}
\end{minipage}
\newline

\begin{minipage}{0.4\textwidth}  
	\vskip 5mm
	\hskip 22mm	$I$ 
	\begin{verbatim}
		
       0    1    2    3
-------------------------
3:    10   20   15    4
4:     -    -    -    -
5:     6   14   11    3
6:     6   15   12    3
-------------------------
Tot:   22   49   38   10
		
	\end{verbatim}
\end{minipage}
\hfill
\begin{minipage}{0.6\textwidth} 
	\hskip 24mm	$\mm I$   
	\begin{verbatim}
				
       0    1    2    3
-------------------------
4:    20   45   36   10
5:     -    -    -    -
6:    10   25   21    6
7:     9   24   21    6
-------------------------
Tot:   39   94   78   22
	\end{verbatim}
\end{minipage}

\vskip 1mm \hskip 22mm $\mm^2 I$ \hskip 56mm  $\mm^3 I$
\end{Example}

\medskip
\medskip
Let $I$ be a graded ideal. Note that by \cite[Corollary 8.2.14]{JT}, $\reg(\mm^{c_I} I)$ is equal to the highest degree of a generator in a minimal set of
generators of $\mm^{c_I} I$, which is equal to $\max\{j:\  (W_{c_I})_j\neq 0\}$. 
So Theorem~\ref{pattern} implies that for any $k\geq c_I$, $\reg(\mm^k I)=(k-c_I)+\max\{j:\  (W_{c_I})_j\neq 0\}$.  
For $1\leq k\leq c_I$,
the examples show an interesting result, that the regularity is the same as that of $\mm^{c_I} I$.  In other words we expect

\begin{Conjecture}
	Let $I$ be a  graded ideal. Then for any $1\leq k\leq c_I$,  $\reg(\mm^k I)=\max\{j:\  (W_{c_I})_j\neq 0\}$. 
\end{Conjecture}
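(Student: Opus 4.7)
The plan is to analyze the regularity of $\mm^kI$ through its local cohomology modules. Applying $H^\bullet_\mm(-)$ to the short exact sequence $0 \to \mm^{k+1}I \to \mm^kI \to W_k \to 0$ (with $W_k = \mm^kI/\mm^{k+1}I$), and using that $\mm^kI \subseteq S$ is $\mm$-torsion-free while $W_k$ is annihilated by $\mm$, the long exact sequence collapses to
$$0 \to W_k \to H^1_\mm(\mm^{k+1}I) \to H^1_\mm(\mm^kI) \to 0$$
together with isomorphisms $H^i_\mm(\mm^{k+1}I) \cong H^i_\mm(\mm^kI)$ for all $i \geq 2$. Iterating, $H^i_\mm(\mm^kI) \cong H^i_\mm(I)$ is independent of $k$ for $i \geq 2$. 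Set $a^* = \max_{2\le i\le n}\{i + \max\{j : H^i_\mm(I)_j \ne 0\}\}$ and $h(k) = \max\{j : H^1_\mm(\mm^kI)_j \ne 0\}$. Since $H^0_\mm(\mm^kI) = 0$, the local cohomology formula for regularity gives
$$\reg(\mm^kI) \;=\; \max\{a^*,\, h(k)+1\}.$$

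Reading the $H^1$-sequence in each internal degree yields the recurrence $h(k+1) = \max\{h(k), \delta_k\}$, where $\delta_k = \max\{j : (W_k)_j \ne 0\}$ is the top degree of a minimal generator of $\mm^kI$. Thus $h$ is non-decreasing in $k$ and $h(k) = \max\{h(0), \delta_0, \ldots, \delta_{k-1}\}$. Specializing at $k = c_I$, the identity $\reg(\mm^{c_I}I) = r$ forces $a^* \le r$ and $h(c_I) \le r-1$. For any $1 \le k \le c_I$, monotonicity of $h$ then yields $\reg(\mm^kI) \le \max\{r, h(c_I)+1\} = r$, the upper bound.

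For the matching lower bound, if $a^* = r$ one is done, since $\reg(\mm^kI) \ge a^* = r$ for every $k$. The remaining case $a^* < r$ forces $h(c_I) = r-1$, whence the conjecture reduces to proving $h(1) = \max\{h(0), \delta_0\} \ge r-1$ (by monotonicity of $h$). This is the main obstacle: one obtains only the trivial bound $\delta_0 \ge r - c_I$ (because every minimal generator of $\mm^{c_I}I$ of degree $r$ factors as $x_{i_1}\cdots x_{i_{c_I}} f$ with $f$ a minimal generator of $I$ of degree $r-c_I$), which handles $c_I = 1$ but not $c_I \ge 2$. A plausible way forward is either to bound $h(0)$ from below by chasing the long exact local cohomology sequence associated to $0 \to I \to S \to S/I \to 0$, or to argue via a socle analysis of $\gr_\mm(I)$ that componentwise linearity of $\mm^{c_I}I$ forbids the sequence $(\delta_k)$ from staying below $r-1$ throughout $0 \le k < c_I$.
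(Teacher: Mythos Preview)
This statement is labeled a \textbf{Conjecture} in the paper, and the paper offers no proof of it; the authors only remark that ``the examples show an interesting result'' and then state what they \emph{expect}. So there is no argument in the paper to compare your attempt against, and you should not present this as a proved theorem.

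That said, your local-cohomology analysis is sound and gives a clean proof of the \emph{upper bound} $\reg(\mm^kI)\le r:=\max\{j:(W_{c_I})_j\ne0\}$ for $1\le k\le c_I$. The key facts---that $H^i_\mm(\mm^kI)$ is independent of $k$ for $i\ge2$, that $H^0_\mm(\mm^kI)=0$, and that the short exact sequence in $H^1$ yields the monotone recurrence $h(k+1)=\max\{h(k),\delta_k\}$---are all correct, and together with $\reg(\mm^{c_I}I)=r$ they force $a^*\le r$ and $h(k)\le h(c_I)\le r-1$. This half is genuinely new relative to the paper and worth recording. (One small sloppiness: your factorization ``$x_{i_1}\cdots x_{i_{c_I}}f$ with $f$ a minimal generator of $I$'' is literal only for monomial ideals; the correct general argument is the one-step estimate $\delta_{k+1}\le\delta_k+1$, since $\mm^{k+1}I=\mm\cdot\mm^kI$, iterated $c_I$ times.)

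You are equally right that the lower bound is the obstruction. In the case $a^*<r$ you need $h(1)\ge r-1$, i.e.\ $\max\{h(0),\delta_0\}\ge r-1$, and nothing in your setup forces this: the recurrence only tells you that \emph{some} $\delta_k$ with $0\le k<c_I$ (or $h(0)$) equals $r-1$, not that the first one does. Your two suggested attacks (bounding $h(0)$ via the sequence $0\to I\to S\to S/I\to0$, or a socle analysis of $\gr_\mm(I)$) are reasonable directions but are not arguments; as written they do not close the gap. Since the authors themselves leave this open, the honest conclusion is that you have established the inequality $\reg(\mm^kI)\le\reg(\mm^{c_I}I)$ for $1\le k\le c_I$, while the reverse inequality remains conjectural.
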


Let $I$ be a monomial ideal with the minimal set of monomial generators $G(I)$. The ideal $I$ is said to have {\em linear quotients}, if there exists an order 
$u_1<\cdots<u_r$ on $G(I)$ such that $(u_1,\ldots,u_{i-1}):u_i$ is generated by some variables. Such an order is called an {\em admissible order} for $I$.

To a given a monomial ideal $I$ and an order $\mathcal{O}: u_1<\cdots<u_r$ on $G(I)$ we attach numerical invariants as follows. For each $2\leq i\leq r$, let $(u_1,\ldots,u_{i-1}):u_i=(w_{i,1},\ldots,w_{i,\ell_i})$. We set $\lambda_{I,\mathcal{O},u_i}=\sum_{j=1}^{\ell_i}(\deg(w_{i,j})-1)$. One can easily see that $I$ has linear quotients with respect to the order $\mathcal{O}$  if and only if $\lambda_{I,\mathcal{O},u_i}=0$ for all $2\leq i\leq r$. For two monomials $u$ and $v$, we set $u:v=u/\gcd(u,v)$. Furthermore, the \textit{support} of $u$ is the set $\text{supp}(u)=\{i:x_i\ \text{divides}\ u\}$.

\begin{Theorem}\label{linquot}
	Let $I\subset S$ be a monomial ideal. Then  $\mm^k I$ has linear quotients for $k\gg 0$.  	
\end{Theorem}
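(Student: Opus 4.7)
The plan is to construct an explicit admissible order on $G(\mm^k I)$ for $k \gg 0$ and verify the linear quotients property directly.

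Write $G(I) = \{f_1, \ldots, f_m\}$ with degrees $d_j = \deg f_j$ ordered so that $d_1 \le \cdots \le d_m$. By Theorem~\ref{asymptotic}, for $k \geq c_I$ the ideal $\mm^k I$ is componentwise linear, and Theorem~\ref{pattern}(d) places its minimal generators in (a subset of) the degrees $\{k + d_1, \ldots, k + d_m\}$.

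First I would establish that for $k$ sufficiently large each degree strand
\[
(\mm^k I)_{\langle k + d_j \rangle} \;=\; \sum_{i \,:\, d_i \le d_j} f_i \, \mm^{k + d_j - d_i}
\]
is polymatroidal. The exponent vectors of its minimal generators form a discrete polymatroid because the large power $\mm^{k + d_j - d_i}$ leaves enough room to carry out the single-variable exchanges required by the exchange axiom; each such strand then has linear quotients with respect to the reverse-lexicographic order by the theorem of Herzog-Takayama.

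Next I would order $G(\mm^k I)$ by concatenating these strand orders in ascending total degree. The contribution to the colon $(v : v \prec u):u$ from same-degree predecessors is generated by variables by construction. For the cross-strand part, given $u \in G(\mm^k I)$ of degree $k + d_j$ and a predecessor $v$ of degree $k + d_i$ with $i < j$, one argues that $v/\gcd(v,u)$ lies, modulo the colon already collected, in the variable-generated ideal produced at level $k + d_i$. Concretely, for $k$ large the polymatroidal structure at level $k + d_i$ lets one build a chain $v = v^{(0)}, v^{(1)}, \ldots, v^{(r)}$ of same-degree generators preceding $u$, successive terms differing by a single-variable swap, whose terminal $v^{(r)}$ satisfies $v^{(r)}/\gcd(v^{(r)},u) = x_s$ for some variable $x_s$.

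The main obstacle is precisely this cross-strand reduction: even when each individual strand is polymatroidal, the interaction between different strands is not automatically tame, and one must carefully chain elementary exchanges to reduce every cross-strand contribution to a variable. This requires an explicit bound on $k$ in terms of the exponent vectors of $f_1, \ldots, f_m$, guaranteeing that all intermediate exchanges remain inside $\mm^k I$ and among predecessors of $u$.
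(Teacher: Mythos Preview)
Your proposal contains a genuine gap: the claim that each degree strand $(\mm^k I)_{\langle k+d_j\rangle}$ becomes polymatroidal for large $k$ is false. Take $I=(x^3,y^3)\subset K[x,y,z]$. For every $k\ge 0$ the ideal $\mm^k I$ is equigenerated in degree $k+3$, so there is a single strand, and its minimal generators are the monomials $x^ay^bz^c$ with $a+b+c=k+3$ and ($a\ge 3$ or $b\ge 3$). Consider $u=x^3z^{k}$ and $v=y^3z^{k}$. We have $\deg_x u=3>0=\deg_x v$, and the only index $j$ with $\deg_{x_j}u<\deg_{x_j}v$ is $j$ corresponding to $y$. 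The exchange axiom would force $x_ju/x=x^2yz^{k}$ to be a generator, but $x^2yz^{k}\notin\mm^k I$ since it is divisible by neither $x^3$ nor $y^3$. Thus $\mm^k I$ is not polymatroidal for any $k$, and no amount of ``room'' supplied by large powers of $\mm$ repairs this: the obstruction lives in the base generators $f_i$ themselves. Your cross-strand reduction argument inherits the same problem, since the chain of single-variable swaps you propose need not stay inside $\mm^k I$.

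The paper avoids any structural claim of this kind. Instead it attaches to an ordered generating set a nonnegative integer invariant $\lambda_{I,\mathcal{O},u_i}=\sum_j(\deg w_{i,j}-1)$, where the $w_{i,j}$ generate the colon $(u_1,\dots,u_{i-1}):u_i$; linear quotients hold precisely when all $\lambda$'s vanish. Starting from any degree-increasing order $\mathcal{O}$ on $G(I)$, the paper constructs an explicit order $\mathcal{O}_1$ on $G(\mm I)$ by listing the monomials $x_{s_{i,1}}u_i,\dots,x_{s_{i,n}}u_i$ in blocks, with the variables in $\supp$ of the colon generators placed first. A direct computation of the new colon ideals then shows $\lambda_{\mm I,\mathcal{O}_1,x_su_i}\le\lambda_{I,\mathcal{O},u_i}$, with strict inequality whenever the right-hand side is positive. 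Iterating, the invariants are driven to zero after finitely many steps, yielding linear quotients for $\mm^k I$ with $k$ large. This descent argument is elementary, works in all characteristics, and does not rely on Theorems~\ref{asymptotic} or~\ref{pattern}.
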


\begin{proof}
	Let $G(I)=\{u_1,\ldots,u_r\}$ with $\deg(u_1)\leq \deg(u_2)\leq\cdots\leq\deg(u_r)$. We consider the order $\mathcal{O}:u_1<\cdots<u_r$. For each $2\leq i\leq r$, let $(u_1,\ldots,u_{i-1}):u_i=(w_{i,1},\ldots,w_{i,\ell_i})$, and let  $A_i=\bigcup_{j=1}^{\ell_i}\supp(w_{i,j})$. Then we may write $[n]=\{s_{i,1},\ldots,s_{i,n}\}$, where $A_i=\{s_{i,1},s_{i,2},\ldots,s_{i,n_i}\}$ for some $n_i\leq n$. Moreover, we set $s_{1,j}=j$ for $1\leq j\leq n$ and $n_1=n$. 
	
	Consider the following order $\mathcal{O}'$ on the monomials $x_iu_j$, $1\leq i\leq n$ and $1\leq j\leq r$:
	\begin{eqnarray*}\label{linear q}
		&x_{s_{1,1}}u_1<x_{s_{1,2}}u_1<\cdots<x_{s_{1,n}}u_1<\\
		&x_{s_{2,1}}u_2<x_{s_{2,2}}u_2<\cdots<x_{s_{2,n}}u_2<\\
		&\vdots\\
		&<x_{s_{r,1}}u_r<x_{s_{r,2}}u_r<\cdots<x_{s_{r,n}}u_r.
	\end{eqnarray*}
	
	For any $i$ and $j$, let $f_{i,j}=x_{s_{i,j}}u_i$ and $$L_{i,j}=(f_{i',j'}:\ \textrm{ $i'<i$, or $i'=i$ and $j'<j$}):f_{i,j}.$$
	If $j>n_i$, then we have $L_{i,j}=(x_{s_{i,\ell}}:\  1\leq \ell\leq j-1)$.
	Indeed, $f_{i,\ell}:f_{i,j}=x_{s_{i,\ell}}$ for all $\ell<j$. Moreover, by our choice of $A_i$, for any $i'<i$, the monomial $u_{i'}:u_i$ is divided by $x_{s_{i,t}}$ for some $1\leq t\leq n_i$. Hence so does $f_{i',j'}:f_{i,j}$, noting that $j>n_i$. 
	
	Now, suppose that $j\leq n_i$.  Then $x_{s_{i,j}}$ divides $w_{i,k}$ for some $k$. Without loss of generality we may assume that $x_{s_{i,j}}$ divides $w_{i,k}$ if and only if $1\leq k\leq q_i$  for some $q_i\leq \ell_i$. Fix an integer $1\leq k\leq q_i$. We have $w_{i,k}=u_{i'}:u_i$ for some $i'<i$. Since $u_i$ and $u_{i'}$ are minimal monomial generators of $I$, there exists a variable $x_p$ which divides $u_i:u_{i'}$. It follows that $x_pu_{i'}:x_{s_{i,j}}u_i=u_{i'}:x_{s_{i,j}}u_i=w_{i,k}/x_{s_{i,j}}$. Hence   $w_{i,k}/x_{s_{i,j}}\in L_{i,j}$ for $1\leq k\leq q_i$. Also for $k>q_i$, with $w_{i,k}=u_{i'}:u_i$, we have  $w_{i,k}=x_{s_{i,j}}u_{i'}:f_{i,j}\in L_{i,j}$. Hence 
	\begin{eqnarray}\label{order}
		\ \ \ \ \ \ \ \ (w_{i,k}/x_{s_{i,j}}: 
		1\leq k \leq q_i)+(w_{i,k}: q_i<k<\ell_i)+(x_{s_{i,\ell}}:\  1\leq \ell\leq j-1)\subseteq L_{i,j}.
	\end{eqnarray}
	To see that $(\ref{order})$ is indeed an equality, it is enough to show that each monomial $f_{i',j'}:f_{i,j}\in L_{i,j}$  is divided by one of the monomials in the left-hand side ideal in (\ref{order}). If $i'=i$, then obviously 
	$f_{i',j'}:f_{i,j}=x_{s_{i,j'}}$.  
	Let $i'<i$. Then $u_{i'}:u_i=w_{i,k} v$ for some integer $k$ and a monomial $v$. 
	If $1\leq k \leq q_i$, then $w_{i,k}/x_{s_{i,j}}$ divides $f_{i',j'}:f_{i,j}$. Otherwise $w_{i,k}$ divides $f_{i',j'}:f_{i,j}$, as desired.
	
	In the given order, some of the generators $f_{i,j}$ may not be minimal or may be repeated in the order. Note that if it happens that $f_{i,j}$ is not a minimal generator, it is divided by some $f_{i',j'}$ with $i'<i$, since  $\deg(u_1)\leq \deg(u_2)\leq\cdots\leq\deg(u_r)$.  We remove all the non-minimal monomial generators $f_{i,j}$ and also the repeated ones wherever they appear again. Then considering the order induced by $\mathcal{O}'$  on the minimal generators $f_{i,j}$, we get an order, say $\mathcal{O}_1$ on the minimal monomial generators of $\mm I$. It can be easily seen that for any minimal generator  $f_{i,j}\in \mm I$, $$(f_{i',j'}: f_{i',j'}<f_{i,j}  \textrm{ in the order } \mathcal{O}_1)=(f_{i',j'}: f_{i',j'}<f_{i,j}  \textrm{ in the order } \mathcal{O}').$$ Hence   $(f_{i',j'}: f_{i',j'}<f_{i,j}  \textrm{ in the order } \mathcal{O}_1):f_{i,j}=L_{i,j}$.
Comparing the generators of $(u_1,\ldots,u_{i-1}):u_i$ and $L_{i,j}$, one can see that $\lambda_{\mm I,\mathcal{O}_1,f_{i,j}}\leq \lambda_{I,\mathcal{O},u_i}$ and  if $\lambda_{I,\mathcal{O},u_i}>0$, then $\lambda_{\mm I,\mathcal{O}_1,f_{i,j}}< \lambda_{I,\mathcal{O},u_i}$. 
	
We rename the set $G(\mm I)$ as $\{g_1,\ldots,g_m\}$. Notice that the order $\mathcal{O}_1$ is a degree increasing order on $G(\mm I)$. Then one may apply the same approach as above on this order to get an order $\mathcal{O}_2$ on $G(\mm^2 I)$, so that  
 $\lambda_{\mm^2 I,\mathcal{O}_2,x_jg_i}\leq \lambda_{\mm I,\mathcal{O}_1,g_i}$ and if $\lambda_{\mm I,\mathcal{O}_1,g_i}>0$, then $\lambda_{\mm^2 I,\mathcal{O}_2,x_jg_i}< \lambda_{\mm I,\mathcal{O}_1,g_i}$. 
 Proceeding this way, for each $k$	we obtain an order $\mathcal{O}_k$ on $G(\mm^k I)$, and it follows that there exists a positive integer $t$ such that $\lambda_{\mm^{t} I,\mathcal{O}_t,h_i}=0$ for any $h_i\in G(\mm^t I)$. Hence $\mathcal{O}_t$ is an admissible order for $\mm^t I$. Obviously if $\mathcal{O}_t$ is an admissible order for $\mm^t I$,  $\mathcal{O}_k$ is an admissible order for $\mm^k I$ for all $k\geq t$.      
\end{proof}

	\end{document}